\newtheorem{theorem}{Theorem}
\theoremstyle{plain}
\newtheorem{corollary}{Corollary}
\newtheorem{definition}{Definition}
\newtheorem{example}{Example}
\newtheorem{proposition}{Proposition}
\newtheorem{remark}{Remark}
\numberwithin{equation}{section}
\begin{document}
\title[$MN$-convex functions]{ On Weighted Means and $MN$-convex functions}
\author{\.{I}mdat \.{I}\c{s}can}
\address{Department of Mathematics, Faculty of Arts and Sciences,\\
Giresun University, 28200, Giresun, Turkey.}
\email{imdat.iscan@giresun.edu.tr}
\subjclass[2000]{Primary 26A51; Secondary 26E60}
\keywords{$MN$-convex functions, Weighted Means, Integral inequalities}

\begin{abstract}
In this paper, we give more general definitions of weighted means and $MN$%
-convex functions. Using these definitions, we also obtain some generalized
results related to properties of $MN$-convex functions. The importance of
this study is that the results of this paper can be reduced to different
convexity classes by considering the special cases of $M$ and $N$.
\end{abstract}

\maketitle

\section{Introduction}

\bigskip The notions of convexity and concavity of a real-valued function of
a real variable are well known \cite{RV73}. The generalized condition of
convexity, i.e. $MN$-convexity with respect to arbitrary means $M$ and $N$,
was proposed in 1933 by Aumann \cite{A33}. Recently many authors have dealt
with these generalizations. In particular, Niculescu \cite{N03} compared $MN$%
-convexity with relative convexity. Andersen et al. \cite{AVV07} examined
inequalities implied by $MN$-convexity. In \cite{AVV07}, Anderson et al.
studied certain generalizations of these notions for a positive-valued
function of a positive variable as follows:

\begin{definition}
\bigskip A function $M:(0,\infty )\times (0,\infty )\rightarrow (0,\infty )$
is called a Mean function if

\begin{enumerate}
\item[(M1)] $M(u,v)=M(v,u),$

\item[(M2)] $M(u,u)=u,$

\item[(M3)] $u<M(u,v)<v$ whenever $u<v,$

\item[(M4)] $M(\lambda u,\lambda v)=\lambda M(u,v)$ for all $\lambda >0.$
\end{enumerate}
\end{definition}

\begin{example}
For $u,v\in (0,\infty )$ 
\begin{equation*}
M(u,v)=A(u,v)=A=\frac{u+v}{2}
\end{equation*}
is the Arithmetic Mean, 
\begin{equation*}
M(u,v)=G(u,v)=G=\sqrt{uv}
\end{equation*}
is the Geometic Mean, 
\begin{equation*}
M(u,v)=H(u,v)=H=A^{-1}(u^{-1},v^{-1})=\frac{2uv}{u+v}\text{ }
\end{equation*}
is the Harmonic Mean, 
\begin{equation*}
M(u,v)=L(u,v)=L=\left\{ 
\begin{array}{cc}
\frac{u-v}{\ln u-\ln v} & u\neq v \\ 
u & u=v%
\end{array}
\right.
\end{equation*}
is the Logarithmic Mean, 
\begin{equation*}
M(u,v)=I(u,v)=I=\left\{ 
\begin{array}{cc}
\frac{1}{e}\left( \frac{u^{u}}{v^{v}}\right) ^{\frac{1}{u-v}} & u\neq v \\ 
u & u=v%
\end{array}
\right.
\end{equation*}
is the Identric Mean, 
\begin{equation*}
M(u,v)=M_{p}(u,v)=M_{p}=\left\{ 
\begin{array}{cc}
A^{1/p}(u^{p},v^{p})=\left( \frac{u^{p}+v^{p}}{2}\right) ^{1/p} & p\in 
\mathbb{R}
\backslash \left\{ 0\right\} \\ 
G(u,v)=\sqrt{uv} & p=0%
\end{array}
\right.
\end{equation*}
is the $p$-Power Mean, In particular, we have the following inequality 
\begin{equation*}
M_{-1}=H\leq M_{0}=G\leq L\leq I\leq A=M_{1}.
\end{equation*}
\end{example}

Anderson et al. in \cite{A47} developed a systematic study to the classical
theory of continuous and midconvex functions, by replacing a given mean
instead of the arithmetic mean.

\begin{definition}
Let $M$ and $N$ be two means defined on the intervals $I$ $\subset \left(
0,\infty \right) $ and $J\subset \left( 0,\infty \right) $ respectively, a
function $f:I\rightarrow J$ is called $MN$-midpoint convex if it satisfies 
\begin{equation*}
f\left( M(u,v)\right) \leq N\left( f(u),f(v)\right)
\end{equation*}%
for all $u,v\in I$.
\end{definition}

The concept of $MN$-convexity has been studied extensively in the literature
from various points of view (see e.g. \cite{A47,A33,M04,N03}),

Let $A\left( u,v,\lambda \right) =\lambda u+(1-\lambda )v$, $G\left(
u,v,\lambda \right) =u^{\lambda }v^{1-\lambda },$ $H\left( u,v,\lambda
\right) =uv/(\lambda u+(1-\lambda )v)$ and $M_{p}\left( u,v,\lambda \right)
=\left( \lambda u^{p}+(1-\lambda )v^{p}\right) ^{1/p}$ be the weighted
arithmetic, geometric, harmonic , power of order $p$ means of two positive
real numbers $u$ and $v$ with $u\neq v$ for $\lambda \in \left[ 0,1\right] ,$
respectively. $M_{p}\left( u,v,\lambda \right) $ is continuous and strictly
increasing with respect to $\lambda \in \mathbb{R}$ for fixed $p\in 
\mathbb{R}
\backslash \left\{ 0\right\} $ and $a,b>0$ with $a>b.$ See \cite%
{I13d,I13,M04,M19,N00,N03} for some kinds of convexity obtained by using
weighted means.

The aims of this paper, a general definition of weighted means and a general
definition of $MN$-convex functions via the weighted means is to give. In
recent years, many studies have been done by considering the special cases
of $M$ and $N$. The importance of this study is that some properties of $MN$%
-convex functions and some related inequalities have been proven in general
terms.

\section{Main Results}

\bigskip

\begin{definition}
\label{D-1}\bigskip A function $M:(0,\infty )\times (0,\infty )\times \left[
0,1\right] \rightarrow (0,\infty )$ is called a weighted mean function if

\begin{enumerate}
\item[(WM1)] $M(u,v,\lambda )=M(v,u,1-\lambda ),$

\item[(WM2)] $M(u,u,\lambda )=u,$

\item[(WM3)] $u<M(u,v,\lambda )<v$ whenever $u<v$ and $\lambda \in \left[
0,1 \right] ,$

\item[(WM4)] $M(\alpha u,\alpha v,\lambda )=\alpha M(u,v,\lambda )$ for all $%
\alpha >0,$

\item[(WM5)] let $\lambda \in \left[ 0,1\right] $ be fixed. Then $%
M(u,v,\lambda )\leq M(w,v,\lambda )$ whenever $u\leq w$ and $M(u,v,\lambda
)\leq M(u,\omega ,\lambda )$ whenever $v\leq \omega .$

\item[(WM6)] let $u,v\in (0,\infty )$ be fixed and $u\neq v$. Then $M(u,v,.)$
is a strictly monotone and continuous function on $\left[ 0,1\right] .$

\item[(WM7)] $M\left( M(u,v,\lambda ),M(z,w,\lambda ),s\right) =M\left(
M(u,z,s),M(v,w,s),\lambda \right) $ for all $u,v,z,w\in (0,\infty )$ and $%
s,\lambda \in \left[ 0,1\right] $.

\item[(WM8)] $M(u,v,s\lambda _{1}+(1-s)\lambda _{2})=M\left( M(u,v,\lambda
_{1}),M(u,v,\lambda _{2}),s\right) $ for all $u,v\in (0,\infty )$ and $%
s,\lambda _{1},\lambda _{2}\in \left[ 0,1\right] $.
\end{enumerate}
\end{definition}

\begin{remark}
According to the above definition every weighted mean function is a mean
function with $\lambda =1/2.$ Also, By (WM6) we can say that for each $x\in %
\left[ u,v\right] \subseteq (0,\infty )$ there exists a $\lambda \in \left[
0,1\right] $ such that $x=M(u,v,\lambda )$. Morever;

i.) If $M(u,v,.)$ is a strictly increasing, then $M(u,v,0)=u$ and $%
M(u,v,1)=v $ whenever $u<v$ (i.e. $M(u,v,\lambda )$ is in the positive
direction)

ii.) If $M(u,v,.)$ is a strictly deccreasing, then $M(u,v,0)=y$ and $%
M(u,v,1)=x$ whenever $u<v$ (i.e. $M(u,v,\lambda )$ is in the negative
direction) and $M(u,v,.)(\left[ 0,1\right] )=\left[ \min \left\{ u,v\right\}
,\max \left\{ u,v\right\} \right] .$
\end{remark}

\begin{remark}
Throughout this paper, we will assume that different weighted means have the
same direction unless otherwise stated.
\end{remark}

\begin{example}
\label{E-1} 
\begin{equation*}
M(u,v,\lambda )=A(u,v,\lambda )=A_{\lambda }=(1-\lambda )u+\lambda v
\end{equation*}%
is the Weighted Arithmetic Mean, 
\begin{equation*}
M(u,v,\lambda )=G(u,v,\lambda )=G_{\lambda }=u^{1-\lambda }v^{\lambda }
\end{equation*}%
is the Weighted Geometic Mean, 
\begin{equation*}
M(u,v,\lambda )=H(u,v,\lambda )=H_{\lambda }=A^{-1}(u^{-1},v^{-1},\lambda )=%
\frac{uv}{\lambda u+(1-\lambda )v}
\end{equation*}%
is the Weighted Harmonic Mean, 
\begin{equation*}
M(u,v,\lambda )=M_{p}(u,v,\lambda )=M_{p,\lambda }=\left\{ 
\begin{array}{cc}
A^{1/p}(u^{p},v^{p},\lambda )=\left( (1-\lambda )x^{p}+\lambda y^{p}\right)
^{1/p} & p\in 
\mathbb{R}
\backslash \left\{ 0\right\} \\ 
G(u,v,\lambda )=u^{1-\lambda }v^{\lambda } & p=0%
\end{array}%
\right.
\end{equation*}%
is the $p$-Power Mean. In particular, we have the following inequality 
\begin{equation*}
M_{-1,\lambda }=H_{\lambda }\leq M_{0,\lambda }=G_{\lambda }\leq
M_{1,\lambda }=A_{\lambda }\leq M_{p,\lambda }
\end{equation*}%
for all $x,y\in (0,\infty ),t\in \left[ 0,1\right] $ and $p\geq 1.$
\end{example}

\begin{proposition}
If $M:(0,\infty )\times (0,\infty )\times \left[ 0,1\right] \rightarrow
(0,\infty )$ is a weighted mean function, then the following identities
hold: 
\begin{equation}
M\left( M\left( a,M(a,b,s),\lambda \right) ,M\left( b,M(a,b,s),\lambda
\right) ,s\right) =M(a,b,s),  \label{p-1}
\end{equation}
\begin{equation}
M\left( M(a,b,\lambda ),M(b,a,\lambda ),1/2\right) =M(a,b,1/2).  \label{p-2}
\end{equation}
\end{proposition}

\begin{proof}
If we take $v=w=M(a,b,s),\ u=a$ and $z=b$ in (WM7) and we use the property
(WM2), then we obtained the identity (\ref{p-1}). By using similar method,
if we take $u=w=a,$ $v=z=b$ and $s=1/2$\ in (WM7) and we use the properties
(WM1) and (WM2), then we obtained the identity (\ref{p-2}).
\end{proof}

\begin{definition}
Let $M$ and $N$ be two weighted means defined on the intervals $I\subseteq
(0,\infty )$ and $J\subseteq (0,\infty )$ respectively, a function $%
f:I\rightarrow J$ is called $MN$-convex (concave) if it satisfies 
\begin{equation*}
f\left( M(u,v,\lambda )\right) \leq \left( \geq \right) N\left(
f(u),f(v),\lambda \right)
\end{equation*}%
for all $u,v\in I$ and $\lambda \in \left[ 0,1\right] .$
\end{definition}

The condition (WM8) in Definition \ref{D-1} shows us that the function\ $%
M(u,v,.)$ is both $MM$-convex and $MM$-concave on $\left[ 0,1\right] $\ for
fixed $u,v\in (0,\infty ).$ It is easily seen that weighted means mentioned
in the Example \ref{E-1} hold the condition (WM8).

We note that by considering the special cases of $M$ and $N$, we obtain
several different convexity classes as $AA$-convexity (classical convexity), 
$AG$-convexity (log-convexity), $GA$-convexity, $GG$-convexity
(geometrically convexity), $HA$-convexity (harmonically convexity), $M_{p}A$%
-convexity ($p$-convexity),...,etc. For some convexity types, see (\cite%
{I13,I13d,N00,N03}).

\begin{definition}
Let $M$ and $N$ be two weighted means defined on the intervals $\left[ u,v%
\right] \subseteq (0,\infty )$ and $J\subseteq (0,\infty )$ respectively and 
$f:\left[ u,v\right] \rightarrow J$ be a function. We say that $f$ is
symmetric with respect to $M(u,v,1/2)$, if it satisfies 
\begin{equation*}
f\left( M(u,v,\lambda )\right) =f\left( M(u,v,1-\lambda )\right)
\end{equation*}%
for all $\lambda \in \left[ 0,1\right] .$
\end{definition}

\begin{theorem}
Let $M$ and $N$ be two weighted means defined on the intervals $\left[ u,v%
\right] \subseteq (0,\infty )$ and $J\subseteq (0,\infty )$ respectively. If
function $f:\left[ u,v\right] \rightarrow J$ is $MN$-convex, then the
function $f$ is bounded.
\end{theorem}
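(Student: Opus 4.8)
The plan is to transcribe the classical argument that a convex function on a compact interval is bounded, replacing the arithmetic mean by the abstract means $M$ and $N$ and replacing the linear structure by the axioms (WM1)--(WM8). I would treat the upper and lower bounds separately. For the upper bound I would invoke the Remark following Definition \ref{D-1}: by (WM6) every point $x\in[u,v]$ can be written as $x=M(u,v,\lambda)$ for some $\lambda\in[0,1]$. Then $MN$-convexity gives $f(x)=f(M(u,v,\lambda))\le N(f(u),f(v),\lambda)$, and the properties (WM2) and (WM3) applied to the mean $N$ pin $N(f(u),f(v),\lambda)$ between $\min\{f(u),f(v)\}$ and $\max\{f(u),f(v)\}$. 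Hence $f(x)\le \max\{f(u),f(v)\}=:K$ for every $x\in[u,v]$, which is the required upper bound.

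For the lower bound I would use the ``reflection about the midpoint'' trick. Set $c=M(u,v,1/2)$, and for a given $x=M(u,v,\lambda)$ introduce the reflected point $\bar x=M(u,v,1-\lambda)$. The key algebraic identity is $M(x,\bar x,1/2)=c$, which I would deduce from (WM8) with $s=\tfrac{1}{2}$, $\lambda_1=\lambda$, $\lambda_2=1-\lambda$, since $\tfrac{1}{2}\lambda+\tfrac{1}{2}(1-\lambda)=\tfrac{1}{2}$ forces the left-hand side of (WM8) to equal $M(u,v,1/2)=c$. Feeding this into $MN$-convexity yields $f(c)\le N(f(x),f(\bar x),1/2)$, and then monotonicity (WM5) together with the already established bound $f(\bar x)\le K$ upgrades this to $f(c)\le N(f(x),K,1/2)$.

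The final and most delicate step is to ``solve'' the inequality $f(c)\le N(f(x),K,1/2)$ for a lower bound on $f(x)$. Because $t\mapsto N(t,K,1/2)$ is nondecreasing by (WM5), the set $\{t:N(t,K,1/2)\ge f(c)\}$ is an upper ray, so $f(x)\ge L:=\inf\{t:N(t,K,1/2)\ge f(c)\}$ for all $x$. I expect this inversion to be the main obstacle, since the axioms guarantee only monotonicity of $N$ in its first two slots and continuity in $\lambda$ via (WM6), not continuity in the first variable; one therefore either supplements (WM5) with continuity of $N$ so that $L$ is the exact solution of $N(L,K,1/2)=f(c)$, or one simply observes that $f$ takes values in $J\subseteq(0,\infty)$ and is hence automatically bounded below by $\inf J\ge 0$, which already closes the gap. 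Combining the upper bound with the lower bound shows that $f$ is bounded, completing the proof.
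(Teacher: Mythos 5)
Your proposal is correct and follows the same overall strategy as the paper: the upper bound $K=\max\{f(u),f(v)\}$ comes from $MN$-convexity together with (WM3) applied to $N$, and the lower bound comes from reflecting $x=M(u,v,\lambda)$ to $\bar{x}=M(u,v,1-\lambda)$ and exploiting the midpoint identity $M(x,\bar{x},1/2)=M(u,v,1/2)$. Two points of divergence are worth recording. First, you obtain the midpoint identity from (WM8) with $s=1/2$, $\lambda_{1}=\lambda$, $\lambda_{2}=1-\lambda$, whereas the paper derives it as identity (\ref{p-2}) from (WM7) combined with (WM1) and (WM2); both derivations are valid. Second, and more substantively, your handling of the final step is sounder than the paper's. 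The paper writes $N(f(x),f(\bar{x}),1/2)=\mu_{0}f(x)+(1-\mu_{0})f(\bar{x})$ for some $\mu_{0}\in(0,1)$ and solves this linear relation, arriving at a ``constant'' $k=\frac{1}{\mu_{0}}\left[ f\left( M(u,v,1/2)\right) -(1-\mu_{0})K\right]$ which in fact depends on $\mu_{0}$, hence on $x$, and is not obviously bounded below as $\mu_{0}\rightarrow 0^{+}$. Your inversion of the nondecreasing map $t\mapsto N(t,K,1/2)$ produces a genuinely uniform lower bound $L$ (finite, since the relevant set of $t$ lies in $(0,\infty)$), and your fallback observation that $f$ takes values in $J\subseteq (0,\infty )$ and is therefore automatically bounded below by $0$ closes the argument in any case; either of these repairs the gap that the paper's own proof leaves open.
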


\begin{proof}
Let $K=\max \left\{ f(u),f(v)\right\} .$ For any $z=M(u,v,\lambda )$ in the
interval $\left[ u,v\right] ,$ By using $MN$-convexity of $f$ and (WM3) we
have 
\begin{equation*}
f(z)\leq N\left( f(u),f(v),\lambda \right) \leq K.
\end{equation*}%
the function $f$ is also bounded from below. For any $z\in \left( u,v\right]
,$ there exists a $\lambda _{0}\in \left( 0,1\right] $ such that $%
z=M(u,v,\lambda _{0}),$ then by using $MN$-convexity of $f$ and (\ref{p-2} )
we have 
\begin{equation}
f\left( M(u,v,1/2)\right) =f\left( M\left( z,M(v,u,\lambda _{0}),1/2\right)
\right) \leq N\left( f(z),f\left( M(v,u,\lambda _{0})\right) ,1/2\right) .
\label{p-3}
\end{equation}%
On the other hand, if $f(z)=f\left( M(v,u,\lambda _{0})\right) ,$ then $%
N\left( f(z),f\left( M(v,u,\lambda _{0})\right) ,1/2\right) =f(z)$ and thus
the function $f$ is also bounded from below.

If $f(z)\neq f\left( M(v,u,\lambda _{0})\right) ,$ then there exists $\mu
_{0}\in \left( 0,1\right) $ such that 
\begin{equation*}
N\left( f(z),f\left( M(v,u,\lambda _{0})\right) ,1/2\right) =\mu
_{0}f(z)+(1-\mu _{0})f\left( M(v,u,\lambda _{0})\right) .
\end{equation*}%
By the inequality (\ref{p-3}) and using $K$ as the upper bound, we have 
\begin{eqnarray*}
f(z) &\geq &\frac{1}{\lambda _{0}}\left[ f\left( M(u,v,1/2)\right)
-(1-\lambda _{0})f\left( M(v,u,\lambda _{0})\right) \right] \\
&\geq &\frac{1}{\lambda _{0}}\left[ f\left( M(u,v,1/2)\right) -(1-\lambda
_{0})K\right] =k.
\end{eqnarray*}%
Thus, we obtain $f(z)\geq $\ $\max \left\{ k,f(u)\right\} $ for\ any $z\in %
\left[ u,v\right] $. This completes the proof.
\end{proof}

\begin{theorem}
Let $M$ and $N$ be two weighted means defined on the intervals $I\subseteq
(0,\infty )$ and $J\subseteq (0,\infty )$ respectively. If the functions $%
f,g:I\rightarrow J$ are $MN$-convex, then $N(f(.),g(.),1/2)$ is a $MN$
-convex function.
\end{theorem}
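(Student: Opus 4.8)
The plan is to verify directly that the composite function $h := N(f(\cdot),g(\cdot),1/2)$ satisfies the defining inequality of $MN$-convexity, i.e. that $h(M(u,v,\lambda)) \leq N(h(u),h(v),\lambda)$ holds for all $u,v \in I$ and $\lambda \in [0,1]$. Since by definition $h(M(u,v,\lambda)) = N\bigl(f(M(u,v,\lambda)),\, g(M(u,v,\lambda)),\, 1/2\bigr)$, the whole argument reduces to bounding the two inner entries using the hypothesis and then reorganizing the resulting nested mean into the required shape.

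First I would invoke the $MN$-convexity of $f$ and of $g$ separately, which gives $f(M(u,v,\lambda)) \leq N(f(u),f(v),\lambda)$ and $g(M(u,v,\lambda)) \leq N(g(u),g(v),\lambda)$. Substituting both of these into the outer mean $N(\cdot,\cdot,1/2)$ and using the monotonicity property (WM5) — applied once in the first slot and once in the second slot — yields $h(M(u,v,\lambda)) \leq N\bigl(N(f(u),f(v),\lambda),\, N(g(u),g(v),\lambda),\, 1/2\bigr)$. This step is essentially bookkeeping; the only care needed is that (WM5) provides monotonicity in each argument \emph{separately}, so the two replacements must be carried out in succession rather than simultaneously.

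The decisive step is the interchange identity (WM7), which I would apply with $s = 1/2$ and the matching $u \mapsto f(u)$, $v \mapsto f(v)$, $z \mapsto g(u)$, $w \mapsto g(v)$. This rewrites the right-hand side exactly as $N\bigl(N(f(u),g(u),1/2),\, N(f(v),g(v),1/2),\, \lambda\bigr)$, which by the definition of $h$ equals $N(h(u),h(v),\lambda)$. Combining this equality with the inequality from the previous paragraph gives $h(M(u,v,\lambda)) \leq N(h(u),h(v),\lambda)$, which is the desired conclusion.

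I expect the main obstacle to be reading off the correct variable matching in (WM7), since that property swaps the roles of the inner weight $\lambda$ and the outer weight $s$: the pairing must group the two $f$-values with the two $g$-values \emph{by the index} $u,v$ (producing $N(f(u),g(u),1/2)$ and $N(f(v),g(v),1/2)$) rather than keeping the $f$-values together. Once this crossing pattern is identified, the identity converts the bound into exactly the required form with no further estimation, and in fact the reorganization in the last step is an equality, so the entire inequality in the theorem is driven solely by the convexity of $f$ and $g$ together with (WM5).
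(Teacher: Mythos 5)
Your proposal is correct and follows essentially the same route as the paper's own proof: both use the $MN$-convexity of $f$ and $g$, then the monotonicity property (WM5) to pass the two bounds through the outer mean $N(\cdot,\cdot,1/2)$, and finally the interchange identity (WM7) with $s=1/2$ to regroup $N\bigl(N(f(u),f(v),\lambda),N(g(u),g(v),\lambda),1/2\bigr)$ as $N\bigl(N(f(u),g(u),1/2),N(f(v),g(v),1/2),\lambda\bigr)$. Your explicit remark that (WM5) must be applied in each slot separately is a useful clarification that the paper leaves implicit.
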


\begin{proof}
Since $f$ and $g$ are $MN$-convex functions, we have 
\begin{equation*}
f\left( M(u,v,\lambda )\right) \leq N\left( f(u),f(v),\lambda \right)
\end{equation*}%
and 
\begin{equation*}
g\left( M(u,v,\lambda )\right) \leq N\left( g(u),g(v),\lambda \right)
\end{equation*}%
for all $x,y\in I$ and $t\in \left[ 0,1\right] .$ Then by (WM5) and (WM7) we
have 
\begin{eqnarray*}
&&N(f(.),g(.),1/2)(M(u,v,\lambda )) \\
&=&N\left( f\left( M(u,v,\lambda )\right) ,g\left( M(u,v,\lambda )\right)
,1/2\right) \\
&\leq &N\left( N\left( f(u),f(v),t\right) ,N\left( g(u),g(v),\lambda \right)
,1/2\right) \\
&=&N\left( N(f(.),g(.),1/2)(u),N(f(.),g(.),1/2)(v),\lambda \right) .
\end{eqnarray*}%
This completes the proof.
\end{proof}

We can give the following results for different convexity classes by
considering the special cases of $M$ and $N$.

\begin{corollary}
\label{c-1}Let $I,J\subseteq (0,\infty )$ and $f,g:I\rightarrow J$ .

i.) If $f$ and $g$ are convex functions, then $A(f(.),g(.),1/2)=(f+g)/2$ is
also convex function.

ii.) If $f$ and $g$ are $GA$-convex functions, then $%
A(f(.),g(.),1/2)=(f+g)/2 $ is also GA-convex function.

iii.) If $f$ and $g$ are harmonically convex functions, then $%
A(f(.),g(.),1/2)=(f+g)/2$ is also harmonically convex function.

iv.) If $f$ and $g$ are $p$-convex functions, then $A(f(.),g(.),1/2)=(f+g)/2$
is also $p$-convex function.

v.) If $f$ and $g$ are log-convex functions, then $G(f(.),g(.),1/2)=\sqrt{fg}
$ is also log-convex function.

vi.) If $f$ and $g$ are $GG$-convex functions, then $G(f(.),g(.),1/2)=\sqrt{%
fg}$ is also $GG$-convex function.

vii.) If $f$ and $g$ are $HG$-convex functions, then $G(f(.),g(.),1/2)=\sqrt{%
fg}$ is also $HG$-convex function.

viii.) If $f$ and $g$ are $AH$-convex functions, then $%
H(f(.),g(.),1/2)=2fg/(f+g)$ is also $AH$-convex function.
\end{corollary}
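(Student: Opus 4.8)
The plan is to observe that Corollary~\ref{c-1} is nothing more than the preceding Theorem read off at the explicit weighted means collected in Example~\ref{E-1}. Each named class corresponds to a pair $(M,N)$: classical convexity to $M=N=A$; $GA$-, harmonic (i.e. $HA$-), and $p$- (i.e. $M_pA$-) convexity to $M\in\{G,H,M_p\}$ with $N=A$; log-convexity ($AG$), $GG$-, and $HG$-convexity to $M\in\{A,G,H\}$ with $N=G$; and $AH$-convexity to $M=A$, $N=H$. Thus items (i)--(iv) all carry the outer mean $N=A$, items (v)--(vii) carry $N=G$, and item (viii) carries $N=H$.

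First I would confirm that $A$, $G$, $H$, $M_p$ are weighted mean functions in the sense of Definition~\ref{D-1}. Since the Theorem's proof invokes only the monotonicity clause (WM5) and the interchange identity (WM7) of the \emph{outer} mean $N$, it suffices for the present purpose to check these two clauses for $A$, $G$, and $H$: for $A$ both follow from linearity of $u\mapsto(1-\lambda)u+\lambda v$; for $G$ they follow from additivity of exponents in $u^{1-\lambda}v^{\lambda}$; and for $H$ they follow by passing to reciprocals and invoking the $A$-case, using $H=A^{-1}(u^{-1},v^{-1},\lambda)$. The remaining axioms, including (WM8) noted for Example~\ref{E-1}, hold by equally elementary computations.

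Next I would apply the Theorem directly. For each item, once $M$ and $N$ are identified as weighted means on $I$ and $J$, the hypothesis that $f,g$ are $MN$-convex is exactly the named convexity assumption, and the conclusion is that $N(f(\cdot),g(\cdot),1/2)$ is again $MN$-convex. It then remains only to evaluate the outer combination at weight $1/2$, namely $A(f,g,1/2)=(f+g)/2$, $G(f,g,1/2)=\sqrt{fg}$, and $H(f,g,1/2)=2fg/(f+g)$. Substituting these expressions into the conclusion of the Theorem produces precisely the eight stated assertions.

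There is no real obstacle here; the only point deserving attention is the standing convention, recorded in the Remark, that the two means $M$ and $N$ be taken in the same direction, which guarantees that the mixed pairs such as $(G,A)$ or $(H,G)$ genuinely satisfy the hypotheses of the Theorem. Granting the axiom checks for the explicit means, Corollary~\ref{c-1} follows immediately by specialization.
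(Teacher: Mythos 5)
Your proposal is correct and matches the paper's intent exactly: the paper states Corollary~\ref{c-1} as an immediate specialization of the preceding theorem to the weighted means $A$, $G$, $H$, $M_p$ of Example~\ref{E-1}, with the identifications of the eight convexity classes with pairs $(M,N)$ and the evaluations $A(f,g,1/2)=(f+g)/2$, $G(f,g,1/2)=\sqrt{fg}$, $H(f,g,1/2)=2fg/(f+g)$ that you give. Your explicit verification of (WM5) and (WM7) for the outer means is a welcome extra detail the paper leaves implicit.
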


\begin{remark}
In Corollary \ref{c-1}, we gave results only for some convexity types. It is
possible to increase the results by considering another special cases of $M$
and $N.$
\end{remark}

\begin{theorem}
Let $M$ and $N$ be two weighted means defined on the intervals $I\subseteq
(0,\infty )$ and $J\subseteq (0,\infty )$ respectively. If $f:I\rightarrow J$
is a $MN$-convex function and $\alpha >0$, then $\alpha f$ is a $MN$-convex
function.
\end{theorem}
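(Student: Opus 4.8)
The plan is to reduce the claim directly to the $MN$-convexity inequality for $f$ and then exploit the positive-homogeneity axiom (WM4) of the weighted mean $N$. Writing out what must be established, I need to verify that for all $u,v\in I$ and $\lambda\in\left[0,1\right]$,
\[
(\alpha f)\left(M(u,v,\lambda)\right)\leq N\left((\alpha f)(u),(\alpha f)(v),\lambda\right),
\]
that is, $\alpha f\left(M(u,v,\lambda)\right)\leq N\left(\alpha f(u),\alpha f(v),\lambda\right)$.

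First I would invoke the hypothesis that $f$ is $MN$-convex, so that $f\left(M(u,v,\lambda)\right)\leq N\left(f(u),f(v),\lambda\right)$ holds for every admissible triple $u,v,\lambda$. Since $\alpha>0$, multiplying both sides of this inequality by $\alpha$ preserves its direction and gives $\alpha f\left(M(u,v,\lambda)\right)\leq \alpha N\left(f(u),f(v),\lambda\right)$. The key step is then to move the scalar $\alpha$ inside the mean $N$: because $N$ is itself a weighted mean on $J$, applying property (WM4) to $N$ yields the identity $\alpha N\left(f(u),f(v),\lambda\right)=N\left(\alpha f(u),\alpha f(v),\lambda\right)$. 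Combining this identity with the scaled inequality produces exactly the required bound, which completes the argument.

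I do not expect any serious obstacle here, as the whole proof rests on the single observation that (WM4) allows a positive constant to pass through $N$. The only point warranting a brief check is that $\alpha f$ must be regarded as mapping into a suitable subinterval of $(0,\infty)$ so that $N\left(\alpha f(u),\alpha f(v),\lambda\right)$ is well defined; this is immediate since $\alpha>0$ and $f$ takes values in $J\subseteq(0,\infty)$.
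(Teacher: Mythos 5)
Your proposal is correct and follows essentially the same route as the paper: apply the $MN$-convexity inequality for $f$, multiply by $\alpha>0$, and use (WM4) for $N$ to absorb the scalar into the mean. The only (cosmetic) difference is that you correctly record the (WM4) step as an equality $\alpha N\left(f(u),f(v),\lambda\right)=N\left(\alpha f(u),\alpha f(v),\lambda\right)$, whereas the paper writes it as an inequality.
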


\begin{proof}
By using $MN$-convexity of $f$ and (WM4), we have 
\begin{equation*}
\alpha f\left( M(u,v,\lambda )\right) \leq \alpha N\left( f(u),f(v),\lambda
\right) \leq N\left( \alpha f(u),\alpha f(v),\lambda \right) .
\end{equation*}%
This completes the proof.
\end{proof}

\begin{theorem}
Let $M,N$ and $K$ be three weighted means defined on the intervals $%
I\subseteq (0,\infty ),J\subseteq (0,\infty )$ and $L\subseteq (0,\infty )$
respectively. If $f:I\rightarrow J$ is a $MN$-convex function and $%
g:J\subseteq (0,\infty )\rightarrow L$ is nondecreasing and $NK$-convex
function, then $g\circ f$ is a $MK$-convex function.
\end{theorem}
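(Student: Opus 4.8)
The plan is to verify the defining inequality for $MK$-convexity of $g \circ f$ directly, by composing the three hypotheses in their natural order. Fix $u,v \in I$ and $\lambda \in \left[ 0,1\right]$; the goal is to establish
\[
(g \circ f)\left( M(u,v,\lambda) \right) \leq K\left( (g\circ f)(u),(g\circ f)(v),\lambda \right).
\]

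First I would invoke the $MN$-convexity of $f$, which gives $f\left( M(u,v,\lambda) \right) \leq N\left( f(u),f(v),\lambda \right)$. Next, since $g$ is nondecreasing, applying $g$ to both sides preserves the inequality, so that $g\left( f\left( M(u,v,\lambda) \right) \right) \leq g\left( N\left( f(u),f(v),\lambda \right) \right)$. Finally, the $NK$-convexity of $g$ bounds the right-hand side above by $K\left( g(f(u)),g(f(v)),\lambda \right)$, and chaining these three inequalities yields exactly the desired bound, with $K\left( g(f(u)),g(f(v)),\lambda \right) = K\left( (g\circ f)(u),(g\circ f)(v),\lambda \right)$.

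The only point requiring care—and the sole genuine obstacle—is ensuring that the intermediate quantity $N\left( f(u),f(v),\lambda \right)$ actually lies in $J$, so that $g$ may legitimately be evaluated at it before the nondecreasing and $NK$-convexity properties are applied. This follows from property (WM3): since $f$ maps into $J$, the values $f(u),f(v)$ belong to $J$, and because $N$ is a weighted mean, $N\left( f(u),f(v),\lambda \right)$ lies between $\min \left\{ f(u),f(v)\right\}$ and $\max \left\{ f(u),f(v)\right\}$, hence in the interval $J$ (using (WM1) to reduce to the case $f(u)<f(v)$, and (WM2) when $f(u)=f(v)$). With this well-definedness secured, the argument requires no further computation.
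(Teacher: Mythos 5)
Your proposal is correct and follows essentially the same argument as the paper: apply the $MN$-convexity of $f$, then the monotonicity of $g$, then the $NK$-convexity of $g$. The only addition is your well-definedness remark about $N\left( f(u),f(v),\lambda \right)$ lying in $J$, which the paper omits but which is a harmless (and slightly more careful) observation.
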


\begin{proof}
By using $MN$-convexity of $f$ , we have 
\begin{equation*}
f\left( M(u,v,\lambda )\right) \leq N\left( f(u),f(v),\lambda \right) .
\end{equation*}%
Since $g$ is $NK$-convex and nondecreasing function 
\begin{equation*}
g\left( f\left( M(u,v,\lambda )\right) \right) \leq g\left( N\left(
f(u),f(v),t\right) \right) \leq K\left( g(f(u)),g(f(v)),\lambda \right) .
\end{equation*}%
This completes the proof.
\end{proof}

\begin{theorem}
Let $M$ and $N$ be two weighted means defined on the intervals $I\subseteq
(0,\infty )$ and $J\subseteq (0,\infty )$ respectively. If the function $%
f:I\rightarrow J$ is $MN$-convex, $M\leq A$ and $N\leq A$ ($A$ is the
weighted arithmetic mean), then $f$ satisfies Lipschitz condition on any
closed interval $\left[ a,b\right] $ contained in the interior $I^{\circ }$
of $I$. Consequently, $f$ is absolutely continuous on $\left[ a,b\right] $
and continuous on $I^{\circ }.$
\end{theorem}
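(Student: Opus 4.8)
The plan is to transplant the classical proof that a convex function is locally Lipschitz on the interior of its domain, replacing the arithmetic mean by $M$ on the variable side and by $N$ on the value side, and using the comparisons $M\le A$ and $N\le A$ to recover genuine arithmetic (chord) estimates. Throughout I would assume, as licensed by the Remark, that $M$ and $N$ act in the positive direction, so that $M(u,v,0)=u$ and $M(u,v,1)=v$ for $u<v$, and similarly for $N$.

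First I would fix a closed interval $[a,b]\subset I^{\circ }$ and choose $\delta >0$ so small that $[a-\delta ,b+\delta ]\subset I^{\circ }$. Applying the boundedness theorem proved above to the restriction of $f$ to this larger subinterval (a restriction of an $MN$-convex function is again $MN$-convex), I get bounds $m\le f\le K$ on $[a-\delta ,b+\delta ]$; set $C=(K-m)/\delta $. The goal is then to establish $|f(x)-f(y)|\le C'|x-y|$ for all $x,y\in \lbrack a,b]$, with $C'$ a fixed multiple of $C$.

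The single structural inequality I would extract from the hypotheses is that, for any $u<v$ in $I$ and the point $p=M(u,v,\lambda )$,
\begin{equation*}
f(p)=f\left( M(u,v,\lambda )\right) \le N\left( f(u),f(v),\lambda \right) \le (1-\lambda )f(u)+\lambda f(v),
\end{equation*}
where the first inequality is $MN$-convexity and the second is $N\le A$; simultaneously $M\le A$ gives $p=M(u,v,\lambda )\le (1-\lambda )u+\lambda v$, which links the abstract mean-parameter $\lambda $ to the spatial position of $p$ inside $[u,v]$. The heart of the argument is to combine these two facts. Given $x<y$ in $[a,b]$, I would realize $x$ as an $M$-mean $x=M(x-\delta ,y,\mu )$ of the left reference point $x-\delta $ and $y$; such a $\mu \in (0,1)$ exists by (WM6) together with the Remark, since $M(x-\delta ,y,\cdot )$ is continuous, strictly monotone, and surjective onto $[x-\delta ,y]$. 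The displayed inequality then controls $f(x)-f(y)$ from above, while $M\le A$ converts $\mu $ into the Euclidean ratio that produces the factor $(y-x)/\delta $, giving one side of the Lipschitz estimate. The opposite side, bounding $f(y)-f(x)$, I would seek by the mirror construction using the right reference point $y+\delta $ and the companion form of the same two inequalities.

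The main obstacle is precisely this translation between the abstract mean-parameter and ordinary distance. Note that (WM6) guarantees only continuity and strict monotonicity of $M(u,v,\cdot )$, and \emph{not} any bi-Lipschitz control, so without the comparisons $M\le A$ and $N\le A$ there is no reason for a chord-slope bound to hold at all; these comparisons are exactly what tether the parameter to position. The delicate point I expect to fight with is obtaining \emph{both} one-sided estimates from the comparisons with $A$ simultaneously, since the inequality $M\le A$ pins the parameter against spatial distance cleanly in one direction, and recovering the reverse direction with the same reference configuration is the step that requires the most care. Once the bound $|f(x)-f(y)|\le C'|x-y|$ on $[a,b]$ is secured, absolute continuity on $[a,b]$ is immediate, because every Lipschitz function is absolutely continuous; and since $[a,b]$ was an arbitrary closed subinterval of $I^{\circ }$, continuity of $f$ on all of $I^{\circ }$ follows at once.
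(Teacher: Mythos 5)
Your overall strategy is the same as the paper's: enlarge $[a,b]$ to $[a-\delta ,b+\delta ]$, invoke the boundedness theorem there, and use $M\leq A$ and $N\leq A$ to convert the abstract convexity inequality into an arithmetic chord estimate. But the proposal has a genuine gap at exactly the point you flag and then leave unresolved: the second one-sided estimate. Your first configuration does work: from $x=M(x-\delta ,y,\mu )$ the comparison $M\leq A$ gives $x\leq (1-\mu )(x-\delta )+\mu y$, hence $(1-\mu )\delta \leq \mu (y-x)\leq y-x$, so $1-\mu \leq (y-x)/\delta $; combined with $f(x)\leq (1-\mu )f(x-\delta )+\mu f(y)$ this yields $f(x)-f(y)\leq (1-\mu )(K-m)\leq \frac{K-m}{\delta }(y-x)$. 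The mirror configuration, however, fails: writing $y=M(x,y+\delta ,\nu )$ and applying $M\leq A$ gives $y\leq (1-\nu )x+\nu (y+\delta )$, i.e. $\nu \geq \frac{y-x}{y-x+\delta }$, which is a \emph{lower} bound on $\nu $, whereas the estimate $f(y)-f(x)\leq \nu (K-m)$ requires an \emph{upper} bound on $\nu $ of order $(y-x)/\delta $. The hypothesis $M\leq A$ pushes the parameter in the wrong direction in this configuration, so the bound on $f(y)-f(x)$ is never obtained and the Lipschitz claim is only half proved.

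The paper sidesteps this asymmetry by a different device: it prescribes the parameter $\lambda =\frac{\left\vert v-u\right\vert }{\varepsilon +\left\vert v-u\right\vert }$ in advance and then chooses the auxiliary point $z$ so that $v=M(u,z,\lambda )$, i.e.\ it inverts the mean in the outer variable $z$ rather than in the parameter. Since $u$ and $v$ are unordered and the formula involves $\left\vert v-u\right\vert $, both one-sided estimates are meant to come from the same computation. To salvage your version you would need an analogous move, or an additional lower comparison for $M$. You should be aware, though, that the difficulty you identified is real and is not fully dispatched in the source either: the paper does not verify that the required $z$ lies in $[a-\varepsilon ,b+\varepsilon ]$, where the bounds $m_{1},m_{2}$ are valid; indeed $M\leq A$ only forces $z\geq v+\varepsilon $ when $v>u$ and gives no upper bound on $z$ at all.
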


\begin{proof}
Choose $\varepsilon >0$ so that $a-\varepsilon $ and $b+\varepsilon $ belong
to I, and let $m_{1}$ and $m_{2}$ be the lower and upper bounds for $f$ on $%
\left[ a-\varepsilon ,b+\varepsilon \right] $ . If $u$ and $v$ are distinct
points of $\left[ a,b\right] $ and we choose a point $z$ such that 
\begin{equation*}
y=M(u,z,t),~t=\frac{\left\vert v-u\right\vert }{\varepsilon +\left\vert
v-u\right\vert },
\end{equation*}%
then 
\begin{equation*}
f(v)\leq N\left( f(u),f(z),\lambda \right) \leq A\left( f(u),f(z),\lambda
\right) =f(u)+\lambda \left[ f(z)-f(u)\right]
\end{equation*}%
\begin{equation*}
f(v)-f(u)\leq \lambda \left[ f(z)-f(u)\right] \leq \lambda (m_{2}-m_{1})<%
\frac{\left\vert v-u\right\vert }{\varepsilon }(m_{2}-m_{1})=K\left\vert
v-u\right\vert
\end{equation*}%
where $K=(m_{2}-m_{1})/\varepsilon $. Since this is true for any $u,v\in %
\left[ a,b\right] $, we conclude that $\left\vert f(v)-f(u)\right\vert \leq
K\left\vert v-u\right\vert $ as desired.

Next we recall that $f$ is absolutely continuous on $\left[ a,b\right] $ if,
corresponding to any $\varepsilon >0$, we can produce a $\delta >0$ such
that for any collection $\left\{ \left( a_{i},b_{i}\right) \right\} _{1}^{n}$
of disjoint open subintervals of $\left[ a,b\right] $ with $%
\sum_{i=1}^{n}\left( b_{i}-a_{i}\right) <\delta $, $\sum_{i=1}^{n}\left\vert
f(b_{i})-f(a_{i})\right\vert <\varepsilon $. Clearly the choice $\delta
=\varepsilon /K$ meets this requirement.

Finally the continuity of $f$ on $I^{\circ }$ is a consequence of the
arbitrariness.
\end{proof}

\begin{theorem}
Let $M$ and $N$ be two weighted means defined on the intervals $I\subseteq
(0,\infty )$ and $J\subseteq (0,\infty )$ respectively. If function $%
f_{\lambda }:I\rightarrow J$ be an arbitrary family of $MN$-convex function 
\textit{s} and let $f(u)=\sup_{\lambda }f_{\lambda }(u)$. If $K=\left\{ x\in
I:f(x)<\infty \right\} $ is nonempty, then $K$ is an interval and $f$ is $MN$%
-convex function on $K$.
\end{theorem}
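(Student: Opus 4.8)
The plan is to establish both conclusions---that $K$ is an interval and that $f$ is $MN$-convex on $K$---simultaneously, since a single chain of inequalities delivers both. To avoid clashing with the weight variable appearing in $M(\cdot,\cdot,\lambda)$, I would reindex the family as $\{f_{\alpha}\}$, so that $f(u)=\sup_{\alpha}f_{\alpha}(u)$.

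First I would fix two points $x,y\in K$ with, say, $x<y$, and pick an arbitrary $z$ lying between them. By the Remark following Definition \ref{D-1} (a consequence of (WM6)), there is a weight $t\in[0,1]$ with $z=M(x,y,t)$, and as $t$ ranges over $[0,1]$ the point $M(x,y,t)$ sweeps out the whole subinterval with endpoints $x,y$. For each member of the family, $MN$-convexity gives
\begin{equation*}
f_{\alpha}\bigl(M(x,y,t)\bigr)\le N\bigl(f_{\alpha}(x),f_{\alpha}(y),t\bigr).
\end{equation*}
Now the monotonicity axiom (WM5) for $N$, applied in each of its first two slots together with the pointwise bounds $f_{\alpha}(x)\le f(x)$ and $f_{\alpha}(y)\le f(y)$, upgrades the right-hand side to $N\bigl(f(x),f(y),t\bigr)$. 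Hence $f_{\alpha}(z)\le N\bigl(f(x),f(y),t\bigr)$ for every $\alpha$, with a right-hand side independent of $\alpha$.

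The finishing step is to take the supremum over $\alpha$. Since $x,y\in K$, the values $f(x)$ and $f(y)$ are finite and positive, so $N\bigl(f(x),f(y),t\bigr)$ is a well-defined positive real number and in particular is finite. Therefore
\begin{equation*}
f(z)=\sup_{\alpha}f_{\alpha}(z)\le N\bigl(f(x),f(y),t\bigr)<\infty,
\end{equation*}
which shows on the one hand that $z\in K$ (so $K$ contains every point between any two of its points, i.e. $K$ is an interval), and on the other hand, rewriting $z=M(x,y,t)$, that $f\bigl(M(x,y,t)\bigr)\le N\bigl(f(x),f(y),t\bigr)$---precisely the $MN$-convexity of $f$ on $K$.

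I do not expect a serious obstacle: this is the standard ``supremum of convex functions is convex'' argument transported to the $MN$ setting. The only points demanding care are (i) invoking (WM6) and the Remark to guarantee that $M(x,y,t)$ realizes every intermediate point, attending to whether the mean is taken in the positive or negative direction per the Remark's convention, and (ii) confirming that $N$ is evaluated only at finite positive arguments so that the bound $N\bigl(f(x),f(y),t\bigr)<\infty$ is legitimate. Both are immediate from the axioms and the definition of $K$.
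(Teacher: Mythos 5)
Your proposal is correct and follows essentially the same route as the paper: bound $f_{\alpha}(M(x,y,t))$ by $N(f_{\alpha}(x),f_{\alpha}(y),t)$ via $MN$-convexity, push the supremum through $N$ using monotonicity, and read off both the interval property of $K$ and the $MN$-convexity of $f$ from the single finite bound $N(f(x),f(y),t)$. You are in fact slightly more explicit than the paper in citing (WM5) for the supremum step and (WM6) for the surjectivity of $M(x,y,\cdot)$ onto the intermediate points, but the argument is the same.
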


\begin{proof}
Let $t\in \left[ 0,1\right] $ and $u,v\in K$ be arbitrary. Then 
\begin{eqnarray*}
&&f\left( M\left( u,v,\lambda \right) \right) \\
&=&\sup_{\alpha }f_{\alpha }\left( M\left( u,v,\lambda \right) \right) \\
&\leq &\sup_{\alpha }\left( N\left( f_{\alpha }(u),f_{\alpha }(v),\lambda
\right) \right) \\
&\leq &N\left( \sup_{\alpha }f_{\alpha }(u),\sup_{\alpha }f_{\alpha
}(v),\lambda \right) \\
&=&N\left( f(u),f(v),\lambda \right) <\infty .
\end{eqnarray*}%
This shows simultaneously that $K$ is an interval, since it contains every
point between any two of its points, and that $f$ is $MN$-convex function on 
$K$. This completes the proof of theorem.
\end{proof}

\begin{theorem}[Hermite-Hadamard's inequalities for $MN$-convex functions]
Let $M$ and $N$ be two weighted means defined on the intervals $I\subseteq
(0,\infty )$ and $J\subseteq (0,\infty )$ respectively. If function $%
f:I\rightarrow J$ is $MN$-convex, then we have 
\begin{equation}
f\left( M(u,v,1/2)\right) \leq \int_{0}^{1}N\left( f\left( M(u,v,\lambda
)\right) ,f\left( M(u,v,1-\lambda )\right) ,1/2\right) d\lambda \leq N\left(
f(u),f(v),1/2\right)  \label{2-2}
\end{equation}%
for all $u,v\in I$ with $u<v.$
\end{theorem}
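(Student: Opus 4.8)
The plan is to prove the two inequalities in \eqref{2-2} separately, each time reducing the claim to a pointwise statement valid for every $\lambda \in [0,1]$ and then integrating over $\lambda$. Throughout I would rely on the axioms (WM1), (WM5), (WM8) of Definition \ref{D-1} and on identity \eqref{p-2} of the Proposition.

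For the left (lower) inequality the key observation is that the midpoint $M(u,v,1/2)$ can be reconstructed from the two symmetric sample points $M(u,v,\lambda)$ and $M(u,v,1-\lambda)$. First I would use (WM1) to write $M(v,u,\lambda)=M(u,v,1-\lambda)$, so that \eqref{p-2} takes the form $M\left(M(u,v,\lambda),M(u,v,1-\lambda),1/2\right)=M(u,v,1/2)$. Applying $f$ to this identity and then invoking the $MN$-convexity of $f$ yields, for each fixed $\lambda$,
\[
f\left(M(u,v,1/2)\right)\leq N\left(f\left(M(u,v,\lambda)\right),f\left(M(u,v,1-\lambda)\right),1/2\right).
\]
Since the left-hand side does not depend on $\lambda$, integrating both sides over $\lambda\in[0,1]$ (the left integrand being constant) gives the lower bound in \eqref{2-2}.

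For the right (upper) inequality I would start from the two instances of $MN$-convexity, $f\left(M(u,v,\lambda)\right)\leq N(f(u),f(v),\lambda)$ and $f\left(M(u,v,1-\lambda)\right)\leq N(f(u),f(v),1-\lambda)$. Feeding both into the first two slots of $N(\cdot,\cdot,1/2)$ and using the monotonicity axiom (WM5) of the weighted mean $N$, I obtain
\[
N\left(f\left(M(u,v,\lambda)\right),f\left(M(u,v,1-\lambda)\right),1/2\right)\leq N\left(N(f(u),f(v),\lambda),N(f(u),f(v),1-\lambda),1/2\right).
\]
The crucial simplification is that the right-hand side collapses to a constant: applying (WM8) with $a=f(u)$, $b=f(v)$, $s=1/2$, $\lambda_1=\lambda$, $\lambda_2=1-\lambda$ gives $N\left(N(f(u),f(v),\lambda),N(f(u),f(v),1-\lambda),1/2\right)=N\left(f(u),f(v),\tfrac{1}{2}\lambda+\tfrac{1}{2}(1-\lambda)\right)=N(f(u),f(v),1/2)$, which is independent of $\lambda$. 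Integrating the pointwise bound over $\lambda\in[0,1]$ then produces the upper bound in \eqref{2-2}.

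I expect the main conceptual step to be the right inequality, specifically the recognition that the affinity-in-the-weight axiom (WM8) is exactly what forces the iterated mean $N\left(N(\cdot,\lambda),N(\cdot,1-\lambda),1/2\right)$ to reduce to $N(\cdot,1/2)$; without (WM8) the upper bound would not collapse to the desired symmetric value. A secondary, purely technical point is the existence of the integral: the integrand $\lambda\mapsto N\left(f(M(u,v,\lambda)),f(M(u,v,1-\lambda)),1/2\right)$ must be integrable, which can be secured via the continuity of $f$ (established in the earlier Lipschitz theorem under $M\leq A$ and $N\leq A$) together with the continuity in $\lambda$ guaranteed by (WM6), but this is not the heart of the argument.
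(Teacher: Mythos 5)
Your proposal is correct and follows essentially the same route as the paper's proof: both sides are reduced to pointwise inequalities in $\lambda$ via the midpoint identity $M\left(M(u,v,\lambda),M(u,v,1-\lambda),1/2\right)=M(u,v,1/2)$ together with $MN$-convexity and the monotonicity axiom (WM5), and then integrated over $[0,1]$. The only cosmetic difference is that you invoke (WM8) directly to collapse $N\left(N(f(u),f(v),\lambda),N(f(u),f(v),1-\lambda),1/2\right)$ to $N(f(u),f(v),1/2)$, whereas the paper cites the identity (\ref{p-2}) (itself derived from (WM7) and (WM1)--(WM2)); both yield the same equality.
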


\begin{proof}
Since $f:I\rightarrow 
\mathbb{R}
$ is a $MN$-convex function, by using (\ref{p-2}) we have 
\begin{eqnarray*}
f\left( M(u,v,1/2)\right) &=&f\left( M\left( M(u,v,\lambda ),M(u,v,1-\lambda
),1/2\right) \right) \\
&\leq &N\left( f\left( M(u,v,\lambda )\right) ,f\left( M(u,v,1-\lambda
)\right) ,1/2\right)
\end{eqnarray*}%
for all $u,v\in I$ and $\lambda \in \lbrack 0,1].$ Further, integrating for $%
\lambda \in \lbrack 0,1]$, we have 
\begin{equation}
f\left( M(u,v,1/2)\right) \leq \int_{0}^{1}N\left( f\left( M(u,v,\lambda
)\right) ,f\left( M(u,v,1-\lambda )\right) ,1/2\right) d\lambda .
\label{2-3}
\end{equation}%
Thus, we obtain the left-hand side of the inequality (\ref{2-2}) from (\ref%
{2-3}).

Secondly, By using $MN$-convexity of $f$ and (WM5) with (\ref{p-2}), we get 
\begin{eqnarray*}
&&N\left( f\left( M(u,v,\lambda )\right) ,f\left( M(u,v,1-\lambda )\right)
,1/2\right) \\
&\leq &N\left( N(f(u),f(v),\lambda ),N(f(u),f(v),1-\lambda ),1/2\right) \\
&=&N\left( f(u),f(v),1/2\right) .
\end{eqnarray*}%
Integrating this inequality with respect to $\lambda $ over $[0,1]$, we
obtain the right-hand side of the inequality (\ref{2-2}). This completes the
proof.
\end{proof}

We can give the following some results for different convexity classes by
considering the special cases of $M$ and $N$. It is possible to increase the
results by considering another special cases of $M$ and $N.$

\begin{corollary}
\label{c-2}Let $I,J\subseteq (0,\infty )$ and $f:I\rightarrow J$ .

i.) If $f$ is convex function, then we have the following well-known
celebrated Hermite-Hadamard's inequalities for convex functions%
\begin{eqnarray*}
f\left( A(u,v,1/2)\right) &=&f\left( \frac{u+v}{2}\right) \\
&\leq &\int_{0}^{1}A\left( f\left( A(u,v,\lambda )\right) ,f\left(
A(u,v,1-\lambda )\right) ,1/2\right) d\lambda \\
&=&\frac{1}{2(v-u)}\int_{u}^{v}f(x)+f(u+v-x)dx \\
&=&\frac{1}{v-u}\int_{u}^{v}f(x)dx \\
&\leq &A\left( f(u),f(v),1/2\right) =\frac{f(u)+f(v)}{2}.
\end{eqnarray*}

ii.) If $f$ is $GA$-convex function, then we have the following
Hermite-Hadamard's inequalities for $GA$-convex functions (see \cite[Theorem
3.1. with $s=1$]{I14}) 
\begin{eqnarray*}
f\left( G(u,v,1/2)\right) &=&f\left( \sqrt{uv}\right) \\
&\leq &\int_{0}^{1}A\left( f\left( G(u,v,\lambda )\right) ,f\left(
G(u,v,1-\lambda )\right) ,1/2\right) d\lambda \\
&=&\frac{1}{2(\ln v-\ln u)}\int_{u}^{v}f(x)+f(\frac{uv}{x})\frac{dx}{x} \\
&=&\frac{1}{\ln v-\ln u}\int_{u}^{v}\frac{f(x)}{x}dx \\
&\leq &A\left( f(u),f(v),1/2\right) =\frac{f(u)+f(v)}{2}.
\end{eqnarray*}

iii.) If $f$ is harmonically convex function, then we have the following
Hermite-Hadamard's inequalities for harmonically-convex functions (see \cite[%
2.4. Theorem]{I13d})%
\begin{eqnarray*}
f\left( H(u,v,1/2)\right) &=&f\left( \frac{2uv}{u+v}\right) \\
&\leq &\int_{0}^{1}A\left( f\left( H(u,v,\lambda )\right) ,f\left(
H(u,v,1-\lambda )\right) ,1/2\right) d\lambda \\
&=&\frac{uv}{2\left( v-u\right) }\int_{u}^{v}f(x)+f\left( \left[
u^{-1}+v^{-1}-x^{-1}\right] ^{-1}\right) \frac{dx}{x^{2}} \\
&=&\frac{uv}{v-u}\int_{u}^{v}\frac{f(x)}{x^{2}}dx \\
&\leq &A\left( f(u),f(v),1/2\right) =\frac{f(u)+f(v)}{2}.
\end{eqnarray*}

iv.) If $f$ is $p$-convex function ($p\neq 0$), then we have the following
Hermite-Hadamard's inequalities for $p$-convex functions (see \cite[Theorem 2%
]{I13})%
\begin{eqnarray*}
f\left( M_{p}(u,v,1/2)\right) &=&f\left( \left[ \frac{u^{p}+v^{p}}{2}\right]
^{1/p}\right) \\
&\leq &\int_{0}^{1}A\left( f\left( M_{p}(u,v,\lambda )\right) ,f\left(
M_{p}(u,v,1-\lambda )\right) ,1/2\right) d\lambda \\
&=&\frac{p}{2\left( v^{p}-u^{p}\right) }\int_{u}^{v}f(x)+f\left( \left[
u^{p}+v^{p}-x^{p}\right] ^{1/p}\right) \frac{dx}{x^{1-p}} \\
&=&\frac{p}{v^{p}-u^{p}}\int_{u}^{v}\frac{f(x)}{x^{1-p}}dx \\
&\leq &A\left( f(u),f(v),1/2\right) =\frac{f(u)+f(v)}{2}.
\end{eqnarray*}

v.) If $f$ is log-convex function, then we have the following
Hermite-Hadamard's inequalities for log-convex functions (see \cite[Theorem
2.1]{DM98})%
\begin{eqnarray*}
f\left( A(u,v,1/2)\right) &=&f\left( \frac{u+v}{2}\right) \\
&\leq &\int_{0}^{1}G\left( f\left( A(u,v,\lambda )\right) ,f\left(
A(u,v,1-\lambda )\right) ,1/2\right) d\lambda \\
&=&\frac{1}{v-u}\int_{u}^{v}\sqrt{f(x)f(u+v-x)}dx \\
&\leq &G\left( f(u),f(v),1/2\right) =\sqrt{f(u)f(v)}.
\end{eqnarray*}

vi.) If $f$ is $GG$-convex function, then we have the following
Hermite-Hadamard's inequalities for $GG$-convex functions (see \cite[the
inequality (7)]{I14b})%
\begin{eqnarray*}
f\left( G(u,v,1/2)\right) &=&f\left( \sqrt{uv}\right) \\
&\leq &\int_{0}^{1}G\left( f\left( G(u,v,\lambda )\right) ,f\left(
G(u,v,1-\lambda )\right) ,1/2\right) d\lambda \\
&=&\frac{1}{\ln v-\ln u}\int_{u}^{v}\sqrt{f(x)f\left( \frac{uv}{x}\right) }%
\frac{dx}{x} \\
&\leq &G\left( f(u),f(v),1/2\right) =\sqrt{f(u)f(v)}.
\end{eqnarray*}

vii.) If $f$ is $HG$-convex function, then we have%
\begin{eqnarray*}
f\left( H(u,v,1/2)\right) &=&f\left( \frac{2uv}{u+v}\right) \\
&\leq &\int_{0}^{1}G\left( f\left( H(u,v,\lambda )\right) ,f\left(
H(u,v,1-\lambda )\right) ,1/2\right) d\lambda \\
&=&\frac{uv}{v-u}\int_{u}^{v}\sqrt{f(x)f\left( \left[ u^{-1}+v^{-1}-x^{-1}%
\right] ^{-1}\right) }\frac{dx}{x^{2}} \\
&\leq &G\left( f(u),f(v),1/2\right) =\sqrt{f(u)f(v)}.
\end{eqnarray*}

viii.) If $f$ is $AH$-convex function, then we have%
\begin{eqnarray*}
f\left( A(u,v,1/2)\right) &=&f\left( \frac{u+v}{2}\right) \\
&\leq &\int_{0}^{1}H\left( f\left( A(u,v,\lambda )\right) ,f\left(
A(u,v,1-\lambda )\right) ,1/2\right) d\lambda \\
&=&\frac{2}{v-u}\int_{u}^{v}\frac{f(x)f(u+v-x)}{f(x)+f(u+v-x)}dx \\
&\leq &A\left( f(u),f(v),1/2\right) =\frac{f(u)+f(v)}{2}.
\end{eqnarray*}
\end{corollary}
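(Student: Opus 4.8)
The plan is to obtain all eight parts of Corollary \ref{c-2} as direct specializations of the Hermite--Hadamard inequality (\ref{2-2}), feeding in the concrete weighted means $M,N\in\{A,G,H,M_p\}$ from Example \ref{E-1} and then converting the parameter integral $\int_0^1(\cdots)\,d\lambda$ into an explicit integral over $[u,v]$ via the change of variable $x=M(u,v,\lambda)$. Each named convexity class is, by the dictionary recorded after the definition of $MN$-convexity, precisely $MN$-convexity for one of the listed pairs $(M,N)$, so for each part the hypothesis of (\ref{2-2}) is met verbatim and the three-term chain applies without further justification; the only work is to rewrite its three terms explicitly.

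For each part I would carry out four routine steps. First, read off the outer terms: the left endpoint $f(M(u,v,1/2))$ yields $(u+v)/2$, $\sqrt{uv}$, $2uv/(u+v)$, or $[(u^p+v^p)/2]^{1/p}$ according as $M=A,G,H,M_p$, and the right endpoint $N(f(u),f(v),1/2)$ yields $(f(u)+f(v))/2$, $\sqrt{f(u)f(v)}$, or $2f(u)f(v)/(f(u)+f(v))$ according as $N=A,G,H$. Second, substitute $x=M(u,v,\lambda)$, which runs monotonically from $u$ to $v$ as $\lambda$ runs from $0$ to $1$ by Remark (i), and record the Jacobian together with the reflection formula expressing $M(u,v,1-\lambda)$ in terms of $x,u,v$. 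Third, rewrite the integrand $N(f(M(u,v,\lambda)),f(M(u,v,1-\lambda)),1/2)$ in the variable $x$. Fourth, only in the four cases $N=A$ (parts i--iv), split the arithmetic mean into a sum and collapse the two summands to the single-$f$ representation using the symmetry $\int_u^v f(\text{reflection})\,(\text{weight})\,dx=\int_u^v f(x)\,(\text{weight})\,dx$; in the cases $N=G$ (parts v--vii) and $N=H$ (part viii) the integrand $\sqrt{f(x)f(\text{reflection})}$, respectively $\tfrac{2f(x)f(\text{reflection})}{f(x)+f(\text{reflection})}$, is already invariant under $\lambda\mapsto 1-\lambda$, so no collapse occurs and the symmetric-argument integral is the final form.

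The Jacobians and reflection formulas are the computational core and I would assemble them once as a table. For $M=A$: $M(u,v,1-\lambda)=u+v-x$ and $dx=(v-u)\,d\lambda$. For $M=G$: $M(u,v,1-\lambda)=uv/x$ and $dx/x=(\ln v-\ln u)\,d\lambda$. For $M=H$: $M(u,v,1-\lambda)=[u^{-1}+v^{-1}-x^{-1}]^{-1}$ and $dx/x^{2}=\tfrac{v-u}{uv}\,d\lambda$. For $M=M_p$ with $p\neq0$: $M(u,v,1-\lambda)=[u^{p}+v^{p}-x^{p}]^{1/p}$ and $dx/x^{1-p}=\tfrac{v^{p}-u^{p}}{p}\,d\lambda$. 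Inserting the relevant row into (\ref{2-2}) then produces the two constants ($\tfrac{1}{2(v-u)}$, $\tfrac{1}{2(\ln v-\ln u)}$, $\tfrac{uv}{2(v-u)}$, $\tfrac{p}{2(v^{p}-u^{p})}$, and their variants) exactly as displayed; I should note that in parts i--iv the very substitution used in step two, applied once more inside the integral, is what furnishes the collapsing symmetry, so the argument is genuinely uniform across all cases.

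The step I expect to be the main obstacle is the reflection computation for the harmonic and power means: verifying the identities $x^{-1}+[M(u,v,1-\lambda)]^{-1}=u^{-1}+v^{-1}$ and $x^{p}+[M(u,v,1-\lambda)]^{p}=u^{p}+v^{p}$, which are what guarantee that the reflected argument equals $[u^{-1}+v^{-1}-x^{-1}]^{-1}$, respectively $[u^{p}+v^{p}-x^{p}]^{1/p}$, as it appears in the stated integrals, together with the correct bookkeeping of the factors $uv$ and $p$ (and the sign conventions when $p<0$) in the Jacobians. Once these identities are established, the remaining seven arithmetic/geometric cases reduce to the $A$- and $G$-substitutions, and each of the eight displays follows by reading off the table entries and, where $N=A$, applying the reflection symmetry to merge the two halves of the arithmetic mean.
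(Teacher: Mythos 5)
Your proposal is correct and follows essentially the same route the paper intends: the paper gives no separate proof of Corollary \ref{c-2}, presenting it as the direct specialization of the Hermite--Hadamard inequality (\ref{2-2}) to the pairs $M,N\in\{A,G,H,M_{p}\}$ from Example \ref{E-1}, and your substitution $x=M(u,v,\lambda )$ with the reflection identities $A(u,v,1-\lambda )=u+v-x$, $G(u,v,1-\lambda )=uv/x$, $H(u,v,1-\lambda )=\left[ u^{-1}+v^{-1}-x^{-1}\right] ^{-1}$, $M_{p}(u,v,1-\lambda )=\left[ u^{p}+v^{p}-x^{p}\right] ^{1/p}$ and the corresponding Jacobians (merging the two summands only when $N=A$) is exactly the computation that yields the displayed integrals. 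Your table checks out in all eight cases, including the sign bookkeeping for $p<0$, so nothing further is needed.
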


\begin{theorem}
Let $M$ and $N$ be two weighted means defined on the intervals $\left[ u,v%
\right] \subseteq (0,\infty )$ and $J\subseteq (0,\infty )$ respectively. If
function $f:\left[ u,v\right] \rightarrow J$ is $MN$-convex and symmetric
with respect to $M(u,v,1/2)$, then we have 
\begin{equation}
f\left( M(u,v,1/2)\right) \leq f(x)\leq N\left( f(u),f(v),1/2\right)
\label{2-4}
\end{equation}%
for all $x\in I.$
\end{theorem}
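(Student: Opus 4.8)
The plan is to prove the two inequalities of (\ref{2-4}) separately, in each case reducing to a single representation $x=M(u,v,\lambda)$ and then exploiting the symmetry of $f$ to collapse a weighted $N$-mean into $f(x)$ via the idempotency property (WM2). First I would fix an arbitrary point $x$ in the domain $[u,v]$ and, invoking (WM6) together with the remark following Definition \ref{D-1}, choose $\lambda\in[0,1]$ with $x=M(u,v,\lambda)$; the endpoints $x=u,v$ correspond to $\lambda\in\{0,1\}$ and may be handled by the same argument.

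For the \emph{left} inequality, I would begin from the identity (\ref{p-2}) (equivalently (WM8) with $s=1/2$), which after applying (WM1) reads $M(u,v,1/2)=M\bigl(M(u,v,\lambda),M(u,v,1-\lambda),1/2\bigr)$. Applying $MN$-convexity of $f$ to the two points $M(u,v,\lambda)$ and $M(u,v,1-\lambda)$ gives
\[
f\bigl(M(u,v,1/2)\bigr)\leq N\bigl(f(M(u,v,\lambda)),f(M(u,v,1-\lambda)),1/2\bigr).
\]
Symmetry of $f$ with respect to $M(u,v,1/2)$ forces $f(M(u,v,\lambda))=f(M(u,v,1-\lambda))=f(x)$, so by (WM2) the right-hand side collapses to $N(f(x),f(x),1/2)=f(x)$, yielding $f(M(u,v,1/2))\leq f(x)$.

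For the \emph{right} inequality, I would apply $MN$-convexity twice: once to $x=M(u,v,\lambda)$, giving $f(x)\leq N(f(u),f(v),\lambda)$, and once to $M(u,v,1-\lambda)$, whose $f$-value again equals $f(x)$ by symmetry, giving $f(x)\leq N(f(u),f(v),1-\lambda)$. Then I would rewrite $N(f(u),f(v),1/2)$ by means of (\ref{p-2}) applied to the mean $N$ as $N\bigl(N(f(u),f(v),\lambda),N(f(u),f(v),1-\lambda),1/2\bigr)$, and invoke the monotonicity axiom (WM5) of $N$ in each of its first two arguments, together with the two bounds just obtained, to conclude $N(f(u),f(v),1/2)\geq N(f(x),f(x),1/2)=f(x)$, using (WM2) once more.

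The substitutions are routine; the step demanding the most care is the right inequality, where one must apply (WM8)/(\ref{p-2}) to $N$ rather than $M$ and then use (WM5) in the correct direction, replacing both inner $N$-means by the common smaller value $f(x)$. One should also confirm that the representation $x=M(u,v,\lambda)$ is available for \emph{every} $x\in[u,v]$, which is precisely what (WM6) and the subsequent remark guarantee.
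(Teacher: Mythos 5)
Your proposal is correct and follows essentially the same route as the paper: represent $x=M(u,v,\lambda)$ via (WM6), use the identity (\ref{p-2}) together with $MN$-convexity, symmetry, and (WM2) for the left inequality, and use (\ref{p-2}) applied to $N$ together with (WM5) and (WM2) for the right inequality. The only difference is cosmetic — you bound $f(x)$ by $N(f(u),f(v),\lambda)$ and $N(f(u),f(v),1-\lambda)$ first and then collapse, whereas the paper starts from $f(x)=N(f(M(u,v,\lambda)),f(M(u,v,1-\lambda)),1/2)$ and bounds the arguments in place.
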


\begin{proof}
Let $x\in \left[ u,v\right] $ be arbitrary point. Then there exists a $%
\lambda \in \left[ 0,1\right] $ such that $x=M(u,v,\lambda ).$ Since $f:%
\left[ u,v\right] \rightarrow J$ is a $MN$-convex function and symmetric
with respect to $M(u,v,1/2)$, by using (\ref{p-2}) we have 
\begin{eqnarray*}
f\left( M(u,v,1/2)\right) &=&f\left( M\left( M(u,v,\lambda ),M(u,v,1-\lambda
),1/2\right) \right) \\
&\leq &N\left( f\left( M(u,v,\lambda )\right) ,f\left( M(u,v,1-\lambda
)\right) ,1/2\right) \\
&=&f(x).
\end{eqnarray*}%
Thus, we obtain the left-hand side of the inequality (\ref{2-4}). Secondly,
By using $MN$-convexity of $f$ and (WM5) with (\ref{p-2}), we get 
\begin{eqnarray*}
&&f(x)=N\left( f\left( M(u,v,\lambda )\right) ,f\left( M(u,v,1-\lambda
\right) ,1/2\right) \\
&\leq &N\left( N(f(u),f(v),\lambda ),N(f(u),f(v),1-\lambda ),1/2\right) \\
&=&N\left( f(u),f(v),1/2\right) .
\end{eqnarray*}%
This completes the proof.
\end{proof}

We can give the following some results for different convexity classes by
considering the special cases of $M$ and $N$. It is possible to increase the
results by considering another special cases of $M$ and $N.$

\begin{corollary}
\label{c-2 copy(1)}Let $I,J\subseteq (0,\infty )$ and $f:I\rightarrow J$ .

i.) If $f$ is a convex function and symmetric with respect to $(u+v)/2$,
then we have the following inequalities for convex functions (see \cite[%
Theorem 2]{D16})%
\begin{equation*}
f\left( \frac{u+v}{2}\right) \leq f(x)\leq \frac{f(u)+f(v)}{2}.
\end{equation*}

ii.) If $f$ is a GA-convex function and symmetric with respect to $\sqrt{uv} 
$, then we have the following inequalities for convex functions (see \cite[%
Theorem 2.9]{I19})%
\begin{equation*}
f\left( \sqrt{uv}\right) \leq f(x)\leq \frac{f(u)+f(v)}{2}.
\end{equation*}

iii.) If $f$ is a $p$-convex function and symmetric with respect to $\left( 
\frac{u^{p}+v^{p}}{2}\right) ^{1/p}$, then we have the following
inequalities for convex functions (see \cite[Theorem 2.2 ]{I20})%
\begin{equation*}
f\left( \left[ \frac{u^{p}+v^{p}}{2}\right] ^{1/p}\right) \leq f(x)\leq 
\frac{f(u)+f(v)}{2}.
\end{equation*}
\end{corollary}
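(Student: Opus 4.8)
The plan is to obtain all three inequalities as immediate specializations of the preceding theorem, which asserts that any $MN$-convex function that is symmetric with respect to $M(u,v,1/2)$ satisfies $f(M(u,v,1/2)) \le f(x) \le N(f(u),f(v),1/2)$ for all $x$. Since each of the classical convexity notions named in the corollary is exactly an $MN$-convexity for a suitable pair of weighted means drawn from Example \ref{E-1}, the whole corollary reduces to choosing the right $M$ and $N$ and then reading off what the general conclusion says after setting $\lambda = 1/2$.

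For part (i) I would take $M=N=A$, the weighted arithmetic mean $A(u,v,\lambda)=(1-\lambda)u+\lambda v$. Then $AA$-convexity is precisely classical convexity, and $A(u,v,1/2)=(u+v)/2$, so the hypothesis of symmetry with respect to $M(u,v,1/2)$ becomes symmetry about $(u+v)/2$. Evaluating the theorem's right endpoint with $N=A$ at $\lambda=1/2$ gives $N(f(u),f(v),1/2)=(f(u)+f(v))/2$, which produces exactly the displayed chain $f((u+v)/2)\le f(x)\le (f(u)+f(v))/2$.

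Parts (ii) and (iii) follow the same pattern but keep $N=A$ while changing only $M$. For (ii) I take $M=G$, the weighted geometric mean $G(u,v,\lambda)=u^{1-\lambda}v^{\lambda}$; then $GA$-convexity is the stated hypothesis, $G(u,v,1/2)=\sqrt{uv}$ is the center of symmetry, and the right endpoint is again $(f(u)+f(v))/2$. For (iii) I take $M=M_{p}$ with $p\neq 0$; then $M_{p}A$-convexity is $p$-convexity, $M_{p}(u,v,1/2)=((u^{p}+v^{p})/2)^{1/p}$ is the center of symmetry, and $N=A$ once more yields $(f(u)+f(v))/2$ on the right. In each case the conclusion of the theorem is copied verbatim after the substitution.

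The only point that needs genuine checking—and it is light bookkeeping rather than a real obstacle—is that each chosen pair $(M,N)$ actually consists of weighted mean functions in the sense of Definition \ref{D-1}, so that the theorem is applicable. This is already recorded in Example \ref{E-1}, where $A$, $G$ and $M_{p}$ are exhibited as weighted means; in particular they satisfy (WM1)--(WM8), so the identity (\ref{p-2}) invoked in the theorem's proof holds for them. With that verification in hand, no part of the corollary requires any computation beyond inserting $\lambda=1/2$ into the chosen means.
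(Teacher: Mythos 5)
Your proposal is correct and matches the paper's (implicit) argument exactly: the corollary is obtained by specializing the preceding symmetry theorem with $M=N=A$ for (i), $M=G$, $N=A$ for (ii), and $M=M_{p}$, $N=A$ for (iii), then evaluating the weighted means at $\lambda=1/2$. The paper offers no further proof beyond this specialization, so there is nothing to add.
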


\section{Conclusion}

The aim of this article is to determine that a mean is called the weighted
mean when it meets what conditions, and also is to give a general definition
of $MN$-convex functions. The importance of this study is that some
properties of $MN$-convex functions and some related inequalities have been
proven in general terms via this general definition of $MN$-convex functions.

\end{document}